\newtheorem{theorem}{Theorem}
\theoremstyle{plain}
\newtheorem{lemma}[theorem]{Lemma}
\newtheorem{proposition}[theorem]{Proposition}
\numberwithin{equation}{section}
\DeclareMathOperator{\Spec}{Spec}
\DeclareMathOperator{\Gl}{Gl}
\DeclareMathOperator{\Mat}{Mat}
\DeclareMathOperator{\Loop}{L}
\renewcommand{\O}{\mathcal{O}}
\newcommand{\plim}{\underleftarrow{\lim}}
\begin{document}

\title[Vector bundles on curves]{A short note on vector bundles on curves}
\author{Martin Kreidl}

\begin{abstract}
In \cite{beauville-laszlo} Beauville and Laszlo give an interpretation of the affine Grassmannian for $\Gl_{n}$ over a field $k$ as a moduli space of, loosely speaking, vector bundles over a projective curve together with a trivialization over the complement of a fixed closed point. In order to establish this correspondence, they use an abstract descent lemma, which they prove in \cite{bl-descente}. It turns out, however, that one can avoid this descent lemma by using a simple approximation-argument, which leads to a more direct prove of the above mentioned correspondence.
\end{abstract}

\maketitle

\section{Introduction}

There is a well-known correspondence between points of the affine Grassmannian for $\Gl_{n}$ and vector bundles on a projective curve together with certain trivializations. Let us recall this correspondence, as Beauville and Laszlo describe it in \cite{beauville-laszlo}.

Let $X$ be a smooth projective curve over $k$, $p\in X$ be a closed point, and choose a uniformizer $z\in \O_{X,p}$. We fix these data for the rest of these notes. For every $k$-algebra $R$ we set
\begin{equation}
\begin{split}
&X_{R} := X\otimes_{\Spec k}\Spec R, \quad X_{R}^{*} := \Spec (\O_{X}(X-\lbrace p\rbrace)\otimes_{k}R),\\
&D_{R} := \Spec R[[z]],\quad D_{R}^{*} := R((z)).
\end{split}
\end{equation}
These data determine a cartesian diagram of schemes

\hspace{\fill}
\begin{equation}\label{diagFormal}
\begin{xy}
\xymatrix{
D_{R}^{*} \ar^{\psi}[r]\ar_{i}[d] & X_{R}^{*} \ar^{j}[d] \\
D_{R} \ar^{f}[r] & X_{R}.
}
\end{xy}
\end{equation}
\hspace{\fill}

Beauville and Laszlo prove the following

\begin{proposition}[\cite{beauville-laszlo}, Proposition 1.4]\label{thmBL1}
The functor
$$
\Loop\Gl_{n}: R \mapsto \Gl_{n}(R((z)))
$$
on the category of $k$-algebras is isomorphic to the functor which associates to $R$ the set of isomorphism classes of triples $(E,\rho,\sigma)$, where $E$ is a vector bundle of rank $n$ over $X_{R}$, and $\rho$ and $\sigma$ are trivializations of $E$ over $X_{R}^{*}$ and $D_{R}$, respectively.
\end{proposition}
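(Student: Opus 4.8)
The plan is to exhibit the isomorphism of functors by writing down two mutually inverse maps explicitly, and to replace the abstract descent step by an elementary observation about bounded modifications at $p$.

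First I would treat the easy direction, sending a triple to a loop. Given $(E,\rho,\sigma)$, I pull the two trivializations back to $D_R^{*}$ along the cartesian square \eqref{diagFormal}: using $j\circ\psi=f\circ i$, both $\psi^{*}\rho$ and $i^{*}\sigma$ trivialize $E|_{D_R^{*}}$, one coming from $X_R^{*}$ and the other from $D_R$. Their comparison $g:=(\psi^{*}\rho)^{-1}\circ i^{*}\sigma$ is an automorphism of $\O_{D_R^{*}}^{\,n}$, i.e.\ an element of $\Gl_{n}(R((z)))=\Loop\Gl_{n}(R)$. This assignment is visibly natural in $R$, and any isomorphism of triples $(E,\rho,\sigma)\to(E',\rho',\sigma')$ — being an isomorphism $E\to E'$ intertwining the respective trivializations — forces the two loops to coincide, so the map descends to isomorphism classes.

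For the inverse I would glue: from $g\in\Gl_{n}(R((z)))$ I form the candidate $E$ by gluing $\O_{X_R^{*}}^{\,n}$ to $\O_{D_R}^{\,n}$ over $D_R^{*}$ via $g$, taking $\rho,\sigma$ to be the tautological trivializations of the two pieces. Once this glued object is known to be an honest vector bundle on $X_R$, everything else is formal: functoriality is built into the construction, and a loop and its glued triple determine each other up to unique isomorphism, so the two constructions are mutually inverse on isomorphism classes.

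The main obstacle, and exactly where \cite{beauville-laszlo} invoke the descent lemma of \cite{bl-descente}, is this existence of $E$: the morphism $f\colon D_R\to X_R$ is not an open immersion but the inclusion of a formal disc, so the gluing is not Zariski-local. I would sidestep descent by an approximation. Since the finitely many entries of $g$ and of $g^{-1}$ lie in $R((z))=R[[z]][z^{-1}]$, they have uniformly bounded pole order, so the lattice $L:=g\cdot R[[z]]^{\,n}$ satisfies $z^{N}R[[z]]^{\,n}\subseteq L\subseteq z^{-N}R[[z]]^{\,n}$ for some $N$. Thus $E$ should differ from the trivial bundle $\O_{X_R}^{\,n}$ only by a modification of bounded order concentrated at $p$, encoded by the single submodule $L/z^{N}R[[z]]^{\,n}$ of the finite-length module $z^{-N}R[[z]]^{\,n}/z^{N}R[[z]]^{\,n}$. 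The crucial elementary remark is that this finite-length datum is insensitive to completion: one has $\O_{X,p}/\mathfrak{m}_{p}^{\,2N}\otimes_{k}R\cong R[[z]]/z^{2N}$, so the very same submodule already defines a coherent modification of $\O^{\,n}$ over an honest affine neighbourhood $V\ni p$ in $X$. Gluing this modification to $\O_{X_R^{*}}^{\,n}$ over the genuine Zariski overlap $V\setminus\{p\}$ then produces a coherent sheaf $E$ on $X_R$. Its local freeness of rank $n$ is clear on $X_R^{*}$, and near $p$ it follows because $L$ is free over $R[[z]]$ and flatness descends along the faithfully flat completion $(\O_{X,p}\otimes_{k}R)_{\mathfrak{p}}\to R[[z]]$. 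I expect this flatness-descends-along-completion verification, together with checking that the algebraized modification is independent of the auxiliary $N$ and of $V$, to be the most delicate bookkeeping; but it replaces the abstract descent lemma by a direct, finite-order construction.
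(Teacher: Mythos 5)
Your construction of the candidate sheaf $E$ (sandwiching the lattice $z^{N}R[[z]]^{n}\subseteq L\subseteq z^{-N}R[[z]]^{n}$, identifying $(\O_{X,p}/\mathfrak{m}_{p}^{2N})\otimes_{k}R\cong R[[z]]/z^{2N}$, and gluing the resulting finite-order modification of $\O^{n}$ to $\O_{X_{R}^{*}}^{n}$ over a Zariski overlap) is sound as far as it goes, and it does produce a finite-type quasi-coherent sheaf agreeing with the formal gluing. The gap is the very last step, where the entire difficulty of the proposition is concentrated: you deduce local freeness near $p$ from the claim that $(\O_{X,p}\otimes_{k}R)_{\mathfrak{p}}\to R[[z]]$ is faithfully flat. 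That map is \emph{not} flat for general $R$; this is precisely the point emphasized in the introduction of the paper (``if $R$ is not Noetherian, $D_{R}$ is in general not flat over $X_{R}$'') and is the entire reason Beauville--Laszlo needed a nonstandard descent lemma. Concretely, $R[[z]]\cong\prod_{i\geq 0}Rz^{i}$ as an $R$-module, and for non-coherent $R$ (e.g.\ $R=k[x_{1},x_{2},\dots]/(x_{i}x_{j})$) this product fails to be $R$-flat; since $(\O_{X,p}\otimes_{k}R)_{\mathfrak{p}}$ is $R$-flat, flatness of $R[[z]]$ over it would force $R$-flatness, a contradiction. So your argument is valid exactly in the Noetherian case, where ordinary fpqc descent already proves the proposition and nothing needs fixing. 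There is a secondary problem in the same step: even granting flatness, you would still have to show that the $z$-adic completion of your algebraized module recovers $L$; without flatness, base change of the sequence $0\to z^{N}\O^{n}\to E\to L/z^{N}R[[z]]^{n}\to 0$ to $R[[z]]$ does not obviously remain exact or identify the completion.

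For contrast, the paper closes this gap by algebraizing the \emph{transition matrix} rather than the lattice: Lemma \ref{lemDensity} factors any $\gamma\in\Gl_{n}(R((z)))$ as $\gamma=g\cdot\delta$ with $g\in\Gl_{n}(A_{R}[1/z])$ and $\delta\in\Gl_{n}(R[[z]])$, by a topological openness/density argument (resting on Lemma \ref{lemUnits}). Since $\delta$ only changes the trivialization $\sigma$ over $D_{R}$, the sheaf glued along $\gamma$ coincides with the one glued along $g$, and the latter is glued over a genuine Zariski covering $U_{R,f}\coprod X_{R}^{*}\to X_{R}$ (Proposition \ref{propAlgebraic}), so local freeness is automatic. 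If you wish to rescue your lattice-algebraization route, you must replace the appeal to flat descent along completion by an argument showing that $L$ admits a basis defined over a subring of $R[[z]]$ that is honestly flat over $\O_{X,p}\otimes_{k}R$ --- which is, in effect, exactly what the factorization lemma provides.
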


As a consequence they obtain

\begin{proposition}[\cite{beauville-laszlo}, Proposition 2.1 and Remark 2.2]\label{thmBL2}
The affine Grassmannian for $\Gl_{n}$, which is by definition the fpqc-sheafification of the functor $R\mapsto \Gl_{n}(R((z)))/\Gl_{n}(R[[z]])$, is isomorphic to the functor which associates to $R$ the set of isomorphism classes of pairs $(E,\rho)$, where $E$ is a vector bundle of rank $n$ over $X_{R}$, and $\rho$ is a trivialization of $E$ over $X_{R}^{*}$.
\end{proposition}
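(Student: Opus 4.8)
The plan is to deduce the statement from Proposition~\ref{thmBL1} by first understanding the effect of the right $\Gl_{n}(R[[z]])$-action on triples, and then passing to the fpqc-sheafification. Under the isomorphism of Proposition~\ref{thmBL1}, an element $g\in\Gl_{n}(R((z)))$ corresponds to the triple $(E,\rho,\sigma)$ obtained by glueing the trivial bundles $\O_{X_{R}^{*}}^{n}$ and $\O_{D_{R}}^{n}$ along $g$ over the common overlap $D_{R}^{*}$, where $\rho$ and $\sigma$ are the tautological trivializations coming from the two charts. From this description, right translation by an element $h\in\Gl_{n}(R[[z]])$ leaves the glued bundle $E$ and the trivialization $\rho$ over $X_{R}^{*}$ untouched and merely replaces the chosen basis of $E|_{D_{R}}$, i.e. changes $\sigma$. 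Hence the presheaf quotient $Q\colon R\mapsto\Gl_{n}(R((z)))/\Gl_{n}(R[[z]])$ is canonically isomorphic to the subpresheaf
\[
F_{0}\colon R\longmapsto \{(E,\rho)\suchThat E|_{D_{R}}\text{ is free}\}/\!\cong
\]
of the pair-functor $F\colon R\mapsto\{(E,\rho)\}/\!\cong$, cut out by requiring $E$ to be trivializable on the formal disc $D_{R}$.

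Next I would verify that $F$ is already an fpqc sheaf. The essential point is rigidity: since $p$ defines an effective Cartier divisor on $X_{R}$ (locally cut out by the non-zero-divisor $z$), the open $X_{R}^{*}$ is schematically dense, so for locally free $E$ the restriction map $\Hom(E,E)\to\Hom(E|_{X_{R}^{*}},E|_{X_{R}^{*}})$ is injective. Therefore a pair $(E,\rho)$ has no nontrivial automorphisms and any isomorphism of two such pairs is unique. Granting this, fpqc descent for quasi-coherent sheaves shows that $F$ is separated (a unique isomorphism over a cover descends, the cocycle condition being automatic by uniqueness) and satisfies glueing (a descent datum of pairs produces a genuine bundle with trivialization over the base).

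It then remains to show that the inclusion $F_{0}\hookrightarrow F$ is a local isomorphism for the fpqc topology. Injectivity is immediate since $F_{0}$ is a subpresheaf. For local surjectivity, given $(E,\rho)$ over $R$ I would pass to a faithfully flat $R\to R'$ over which $E|_{D_{R}}\bmod z$ --- which is the restriction of $E$ to the divisor $\Spec R\cong\{p\}\times\Spec R$, a rank-$n$ bundle on $\Spec R$ --- becomes free; such an $R'$ exists because finite projective modules are Zariski-locally free. Since $R'[[z]]$ is $z$-adically complete with residue ring $R'$, a frame of the reduction lifts, by the standard Nakayama-type lifting argument over a complete ring, to a frame of $E|_{D_{R'}}$, so that $(E,\rho)|_{R'}$ lies in $F_{0}(R')$. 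Because sheafification is exact, a monomorphism that is locally surjective induces an isomorphism on sheafifications; hence the sheafification of $F_{0}$ --- equivalently of $Q$ --- is $F$ itself, which is the assertion.

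Finally, I expect the main obstacle to be precisely this last, approximation-flavoured step: one must control freeness of $E$ on the whole disc $D_{R}$ by freeness of its reduction modulo $z$, which forces one to argue via the $z$-adic completeness of $R[[z]]$ and to be careful that $R$ is an arbitrary $k$-algebra, neither Noetherian nor local. This lifting over the formal disc is exactly what takes the place of the abstract descent lemma of \cite{bl-descente}.
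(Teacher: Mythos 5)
Your proposal is correct, but note that the paper itself never proves Proposition \ref{thmBL2}: it is quoted from \cite{beauville-laszlo} (Prop.\ 2.1 and Rem.\ 2.2) as a consequence of Proposition \ref{thmBL1}, the paper's own contribution being the new proof of the latter. So there is no in-paper argument to compare against; what you wrote is precisely the deduction the paper delegates to the citation, and each step holds up. (i) Via Proposition \ref{thmBL1} the presheaf quotient is the subpresheaf of pairs $(E,\rho)$ whose restriction to $D_{R}$ is free; with the transition-matrix convention $\gamma = \rho^{-1}\circ\sigma$ over $D_{R}^{*}$, right translation by $\Gl_{n}(R[[z]])$ indeed changes only $\sigma$, matching the right-coset quotient in the definition. (ii) Rigidity is correct: the complement of $X_{R}^{*}$ is an effective Cartier divisor (locally cut out by a non-zero-divisor, since $0\to\O_{X,p}\xrightarrow{z}\O_{X,p}\to k \to 0$ stays exact after $\otimes_{k}R$), so endomorphisms of a locally free sheaf inject under restriction to $X_{R}^{*}$, and the pairs functor is an fpqc sheaf by descent of quasi-coherent modules plus uniqueness of the gluing isomorphisms. (iii) Your lifting step works and needs only that $E|_{D_{R'}}$ is a finite projective $R'[[z]]$-module and that $z$ lies in the Jacobson radical of $R'[[z]]$ (i.e.\ $1+zR'[[z]]\subset R'[[z]]^{\times}$, which is where completeness enters): Nakayama applied to the cokernel and then to the split kernel of the lifted frame gives freeness of $E|_{D_{R'}}$ from freeness of $E|_{D_{R'}}/z$. (iv) A monomorphism of presheaves into an fpqc sheaf which is fpqc-locally surjective identifies that sheaf with the sheafification. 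Two remarks: your cover in (iii) is actually a Zariski cover of $\Spec R$, which fits nicely with the paper's theme that the whole situation is Zariski-local rather than merely fppf-local; and, contrary to your closing paragraph, this lifting step is the easy part --- the genuine difficulty, namely that an arbitrary $\gamma\in\Gl_{n}(R((z)))$ glues to an honest vector bundle on $X_{R}$, is Proposition \ref{thmBL1}, which you rightly take as input, and it is there (not in the quotient step) that the descent lemma of \cite{bl-descente} is avoided.
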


The interesting part in the proof of Proposition \ref{thmBL1} is to see why the data of trivial vector bundles of rank $n$ on $D_{R}$ and $X_{R}^{*}$, respectively, together with a transition function over $X_{R}^{*}$, determine a vector bundle on $X_{R}$. This is not a classical descent situation, since if $R$ is not Noetherian, $D_{R}$ is in general not flat over $X_{R}$. In \cite{bl-descente} Beauville and Laszlo prove that descent holds nontheless.

In the present notes we present an alternative proof of Proposition \ref{thmBL1} using the following strategy. We define the subring $A_{R}\subset R[[z]]$ as a certain localization of $\O_{X,p}\otimes_{k}R$, which depends functorially on $R$ and determines a flat neighborhood of the locus $z=0$ in $X_{R}$. Let us write $\Delta_{R} = \Spec A_{R}$ and $\Delta_{R}^{*}=\Spec A_{R}[1/z]$.
Then $\Delta_{R}\coprod X_{R}^{*}\to X_{R}$ is an fppf-covering, and if we could replace $D_{R}$ by $\Delta_{R}$ and $D_{R}^{*}$ by $\Delta_{R}^{*}$ in the formulation of Proposition \ref{thmBL1}, then this proposition would immediately follow by faithfully flat descent.
Indeed, we will show below how to arrive at this situation using a simple approximation argument. Moreover, the concrete situation will turn out to be not only fppf-local, but even Zariski-local, so that descent of vector bundles holds trivially.

\section{Vector bundles on a smooth curve}\label{sectionAlgebraic}

Note that the choice of a uniformizer $z\in \O_{X,p}$ determines an inclusion $(R\otimes_{k}\O_{X,p}) \subset R[[z]]$, $R[[z]]$ being the completion with respect to the $z$-adic valuation.
For each $f\in (R\otimes_{k}\O_{X,p})\cap R[[z]]^{\times}$
we define $S_{R,f} := (R\otimes_{k}\O_{X,p})_{f} \subset R[[z]]$. The union of all these rings, for varying $f$, will be denoted $A_{R}$. Writing $\Delta_{R} := \Spec A_{R}$ and $\Delta_{R}^{*} := \Spec A_{R}[1/z]$ we have a cartesian diagram

\hspace{\fill}
\begin{xy}
\xymatrix{
\Delta_{R}^{*} \ar^{\psi}[r]\ar^{\iota}[d] & X_{R}^{*} \ar^{j}[d] \\
\Delta_{R} \ar^{\varphi}[r] &  X_{R}.
}
\end{xy}
\hspace{\fill}

Moreover we set $U_{R,f} := \Spec S_{R,f}$.

\begin{lemma}\label{lemCoverings}
The morphism $D_{R} \coprod X_{R}^{*}\to X_{R}$
is surjective. Thus $\Delta_{R} \coprod X_{R}^{*} \to X_{R}$ is an fppf-, and $U_{R,f} \coprod X_{R}^{*} \to X_{R}$ is a Zariski-covering for each $f\in (R\otimes_{k}\O_{X,p})\cap R[[z]]^{\times}$.
\end{lemma}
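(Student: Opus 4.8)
The plan is to settle the set-theoretic surjectivity of $D_{R}\coprod X_{R}^{*}\to X_{R}$ directly, and then to transport it to $\Delta_{R}$, where the additional input is flatness. First I would observe that, $X-\lbrace p\rbrace$ being affine, $j\colon X_{R}^{*}\to X_{R}$ is an open immersion whose image is the complement of the closed subscheme $Z$ cut out by $z=0$, namely the fibre $\lbrace p\rbrace\times_{\Spec k}\Spec R$. Hence surjectivity of $D_{R}\coprod X_{R}^{*}\to X_{R}$ is equivalent to the inclusion $Z\subseteq\im(D_{R}\to X_{R})$. This I would obtain from the completion itself: the morphism $D_{R}=\Spec R[[z]]\to X_{R}$ factors through $\Spec(R\otimes_{k}\O_{X,p})$, and reducing modulo $z$ gives $\Spec(R[[z]]/(z))=\Spec R$, which maps isomorphically onto $Z$. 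Thus every point over $p$ lies in the image of $D_{R}$ and every other point lies in the image of $j$. Note that only the topological surjectivity of $D_{R}\to X_{R}$ is used here, never its flatness, which is precisely what fails for non-Noetherian $R$.

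Next I would pass to $\Delta_{R}$. Since $A_{R}\subseteq R[[z]]$, the morphism $D_{R}\to X_{R}$ factors through $\varphi\colon\Delta_{R}\to X_{R}$, so $Z\subseteq\im(\varphi)$ and therefore $\Delta_{R}\coprod X_{R}^{*}\to X_{R}$ is surjective. For flatness I would work on an affine chart $V=\Spec B\ni p$ of $X$: the map $B\to\O_{X,p}$ is a localization, hence flat, and remains flat after $-\otimes_{k}R$, so $R\otimes_{k}\O_{X,p}$ is flat over $\O_{X_{R}}$; inverting $f$ preserves flatness, so each $S_{R,f}$ is flat over $\O_{X_{R}}$, and $A_{R}=\varinjlim_{f}S_{R,f}$ is a filtered colimit, hence flat as well. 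Together with surjectivity this makes $\varphi$ faithfully flat, and since $\Delta_{R}$ and $X_{R}^{*}$ are affine the morphism is quasi-compact, so that faithfully flat descent of quasi-coherent sheaves applies over this covering.

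Finally, for the Zariski statement I would fix $f$ and identify $U_{R,f}$ with a localization of a genuine open. Restricting to the chart $V$ and clearing denominators, $f$ is defined on $V_{R}$, and $S_{R,f}$ is obtained from $\O_{X_{R}}(V_{R})_{f}=\O_{X_{R}}(D(f))$ by the further (flat, pro-open) localization at $B\setminus\mathfrak m$ that defines $\O_{X,p}$. Hence $U_{R,f}\to X_{R}$ is a flat monomorphism whose image sits inside the open $\lbrace f\neq 0\rbrace$, and since $f\in R[[z]]^{\times}$ forces $p\in\lbrace f\neq 0\rbrace$, these images together with $\im(j)$ cover $X_{R}$. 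Because $\O_{X,p}=\varinjlim_{W\ni p}\O_{X}(W)$, the localization at $p$ is a filtered limit of honest Zariski opens, so a vector bundle, being of finite presentation, spreads out to one over an actual open; this is the approximation step that renders the cover Zariski-local for the purpose of gluing bundles.

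I expect the genuine obstacle to be exactly this last reconciliation: $\varphi$ is flat but not of finite type (the localization at $B\setminus\mathfrak m$ is not finitely presented), so $\Delta_{R}\to X_{R}$ is not literally an fppf morphism in the strict finite-presentation sense, and likewise the $U_{R,f}$ are pro-open rather than open. The argument must therefore be carried by quasi-compactness and by the filtered-colimit (approximation) structure of $\O_{X,p}$ rather than by finite presentation; making precise that vector bundles descend along these pro-open localizations is where the real care is needed.
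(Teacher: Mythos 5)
Your proof is correct, and on the one point the paper actually argues --- the surjectivity of $D_{R}\coprod X_{R}^{*}\to X_{R}$ --- you take a genuinely different route. The paper works pointwise: for a point $P\in X_{R}$ at which $z$ is not invertible in the local ring $A=(\O_{X}\otimes R)_{P}$, it forms the $z$-adic completion $\hat{A}=\plim A/z^{N}$, checks $can^{-1}(\hat{\mathfrak{p}})=\mathfrak{p}$ by passing to the limit over the sequences $0\to\mathfrak{p}/(z^{N})\to A/(z^{N})\to A/\mathfrak{p}\to 0$, and exhibits $\hat{\mathfrak{p}}\cap R[[z]]$ as a preimage of $P$ in $D_{R}$. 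You argue globally: the complement of $\im(j)$ is exactly the fibre over $p$, and that fibre is already the isomorphic image of the closed subscheme $\Spec R[[z]]/(z)\subset D_{R}$. Your route is shorter and avoids completing local rings (which are non-Noetherian for general $R$); the paper's route instead constructs an explicit preimage prime for each point separately. Two small repairs to your write-up: the fibre over $p$ is not globally ``cut out by $z=0$'' ($z$ is a rational function defined only near $p$ and may vanish at other points where it is defined), but your argument only uses the correct statement that the complement of $\im(j)$ is this fibre; and for the $U_{R,f}$-covering, joint surjectivity follows cleanly from the factorization of $D_{R}\to X_{R}$ through $U_{R,f}$ (because $S_{R,f}\subset R[[z]]$), not merely from the images lying inside $D(f)$.

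Your closing caveat is not a defect of your argument but a correct criticism of the lemma as stated, which the paper's own proof (ending at surjectivity with ``Thus'') never addresses: $A_{R}$ is a localization of $R\otimes_{k}\O_{X,p}$, so $\varphi$ is flat, affine and quasi-compact but not of finite presentation --- the covering is fpqc rather than fppf in the strict sense --- and $U_{R,f}$ is pro-open rather than open. Indeed, for $R=k$ every admissible $f$ is already a unit in $\O_{X,p}$, so $U_{k,f}=\Spec\O_{X,p}$ is a two-point subspace of $X$, which is not open; this matters because the proof of Proposition \ref{propAlgebraic} asserts verbatim that ``$U_{R,f}\subset X_{R}$ is Zariski-open''. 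The repair is exactly your spreading-out step: the finitely many relevant elements of $S_{R,f}[1/z]$ lie in $(\O_{X}(V)\otimes R)_{f}[1/z]$ for some affine open $V\ni p$, and $D(f)\subset V_{R}$ is an honest Zariski open containing the fibre over $p$ (since $f\in R[[z]]^{\times}$ restricts to a unit along it), so the gluing is Zariski-local after all. For the fpqc point, faithful flatness plus quasi-compactness is all that descent of quasi-coherent sheaves requires, so the applications in the paper are unaffected.
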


\begin{proof}
Let $P$ be a point of $X_{R}$ and let $A=(\O_{X}\otimes R)_{P}$ be the local ring at $P$. Either $z$ is invertible in $A$ -- then $P\in X_{R}^{*}$ -- or $z$ is in the maximal ideal $\mathfrak{p} \subset A$. In the latter case we consider $can: A\to\hat{A}=\plim A/z^{N}$ and the ideal $\hat{\mathfrak{p}} = \plim \mathfrak{p}/z^{N}$. Passing to the inverse limit over the short exact sequences
$$
0 \to \mathfrak{p}/(z^{N}) \to A/(z^{N}) \to A/\mathfrak{p} \to 0
$$
we obtain $can^{-1}(\hat{\mathfrak{p}}) = \mathfrak{p}$, and the commutative square

\hspace{\fill}
\begin{xy}
\xymatrix{
\Spec \hat{A} \ar[r]\ar[d] & \Spec R[[z]]=D_{R} \ar[d] \\
\Spec A \ar[r] &  X_{R}.
}
\end{xy}
\hspace{\fill}

shows that $\hat{\mathfrak{p}}\cap R[[z]] \subset R[[z]]$ is a preimage of $P$ in $D_{R}$.
\end{proof}

Let $T$ be the functor on the category of $k$-algebras, which associates to a $k$-algebra $R$ the set of isomorphisms classes of triples $(E,\rho,\sigma)$, where $E$ is a vector bundle of rank $n$ on $X_{R}$, and
\begin{align*}
\rho: \mathcal{O}_{X_{R}^{*}}^{n} \xrightarrow{\simeq} E_{\rvert X^{*}_{R}},\\
\sigma: \mathcal{O}_{\Delta_{R}}^{n} \xrightarrow{\simeq} E_{\rvert \Delta_{R}}
\end{align*}
are trivializations. To each isomorphism class $[(E,\rho,\sigma)]\in T(R)$ we may assign the respective `transition matrix over $\Delta_{R}^{*}$'. This is independent of the actual representative of $[(E,\rho,\sigma)]$ and hence determines a morphism of functors
$$
\Phi(R): T(R) \to \Gl_{n}(A_{R}[1/z]); \quad (E,\rho,\sigma) \mapsto \Gamma(X_{R}, (\rho\rvert_{\Delta_{R}^{*}})\circ(\sigma^{-1}\rvert_{\Delta_{R}^{*}})).
$$

\begin{proposition}\label{propAlgebraic}
The morphism $\Phi(R)$ defined above is an isomorphism of functors.
\end{proposition}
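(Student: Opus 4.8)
The plan is to build an explicit inverse of $\Phi(R)$ by gluing. The structural observation I would record first is that all the relevant structure maps to $X_R$ --- the open immersion $j\colon X_R^*\to X_R$ as well as the localizations $\varphi\colon\Delta_R\to X_R$ and $U_{R,f}\to X_R$ --- are flat monomorphisms, so that $U\times_{X_R}U\cong U$ for each of them. Hence, for the two-term covering $\Delta_R\coprod X_R^*\to X_R$ of Lemma~\ref{lemCoverings}, every self-overlap in the fibre product collapses and the only non-diagonal contribution is $\Delta_R\times_{X_R}X_R^*=\Delta_R^*$. Consequently the descent of quasi-coherent sheaves along this covering is effective and its descent data degenerate exactly as in the Zariski gluing of two opens: a rank-$n$ bundle $E$ on $X_R$ equipped with trivialisations $\sigma$ over $\Delta_R$ and $\rho$ over $X_R^*$ is the same datum as a single gluing automorphism of $\mathcal{O}^n$ over $\Delta_R^*$, the triple-overlap cocycle condition being automatic. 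Such an automorphism is precisely an element of $\Gl_n(A_R[1/z])$, the invariant recorded by $\Phi(R)$; this already suggests bijectivity, and it remains to exhibit the inverse honestly.

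For surjectivity I would invoke the approximation argument promised in the introduction. Given $g\in\Gl_n(A_R[1/z])$, the finitely many entries of $g$ and of $g^{-1}$ all lie in $S_{R,f}[1/z]$ for a single admissible $f$, because $A_R[1/z]=\varinjlim_f S_{R,f}[1/z]$ is a filtered union; thus $g\in\Gl_n(S_{R,f}[1/z])$. Using $g$ as transition matrix over $U_{R,f}\times_{X_R}X_R^*=\Spec S_{R,f}[1/z]$, I glue $\mathcal{O}^n_{U_{R,f}}$ to $\mathcal{O}^n_{X_R^*}$ along the covering $U_{R,f}\coprod X_R^*\to X_R$, which is a Zariski covering by Lemma~\ref{lemCoverings}; this gluing is therefore completely elementary and needs neither the fpqc machinery nor the descent lemma of Beauville and Laszlo. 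One obtains a bundle $E$ on $X_R$ with tautological trivialisations over $U_{R,f}$ and over $X_R^*$; restricting the former along $\Delta_R\to U_{R,f}$ gives $\sigma$, the latter is $\rho$, and $\Phi(R)(E,\rho,\sigma)=g$ by construction.

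The step I expect to carry the actual content --- as opposed to the formal bookkeeping above --- is the verification that the glued sheaf is locally free of rank $n$ on \emph{all} of $X_R$, in particular over the locus $z=0$ lying over the point $p$. This is exactly where the flatness of $\Delta_R$ (equivalently of $U_{R,f}$) over $X_R$, built into the very definition of $A_R$, is used: local freeness of a fixed rank descends along a faithfully flat covering, so the elementary two-piece gluing already yields an honest vector bundle, and it is this flatness that renders the Beauville--Laszlo descent lemma unnecessary. Concretely, $E$ may be described through the Milnor-type cartesian square of modules attached to the diagram relating $\Delta_R$, $X_R^*$ and $\Delta_R^*$.

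For injectivity, suppose $(E,\rho,\sigma)$ and $(E',\rho',\sigma')$ have the same image $g$ under $\Phi(R)$. Then the isomorphisms $\rho'\circ\rho^{-1}$ over $X_R^*$ and $\sigma'\circ\sigma^{-1}$ over $\Delta_R$ restrict to one and the same element of $\Gl_n(A_R[1/z])$ over $\Delta_R^*$ --- this is just a rewriting of the equality $g=g'$ --- and hence glue, by the same two-piece descent, to a global isomorphism $E\xrightarrow{\simeq}E'$ respecting both trivialisations; so the two triples define the same class in $T(R)$. Finally, the rings $A_R$ and $S_{R,f}$, the overlaps $\Delta_R^*$, and the entire gluing construction are manifestly natural in $R$, so the inverse produced above is a morphism of functors and $\Phi$ is an isomorphism.
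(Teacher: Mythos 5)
Your proposal is correct and follows essentially the same route as the paper: the heart of both arguments is the approximation step $g\in\Gl_{n}(S_{R,f}[1/z])$, which reduces the construction of the inverse to gluing trivial bundles along the two-piece covering $U_{R,f}\coprod X_{R}^{*}\to X_{R}$ of Lemma \ref{lemCoverings}, with descent along $\Delta_{R}\coprod X_{R}^{*}$ as the alternative the paper also offers. You are in fact slightly more careful than the paper on two points: you note that the entries of $g^{-1}$, not just of $g$, must lie in a common $S_{R,f}[1/z]$, and you spell out the injectivity of $\Phi(R)$, which the paper's proof leaves implicit.
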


\begin{proof}
We have to construct an inverse for $\Phi(R)$. To this end, we choose a matrix $g \in \Gl_{n}(A_{R}[1/z])$ and consider the following diagram of quasi-coherent sheaves on $X_{R}$,

\hspace{\fill}
\begin{xy}
\xymatrix{
E \ar[rr]\ar[d] & & \mathcal{O}_{X_{R}^{*}}^{n} \ar^{can}[d] \\
\mathcal{O}_{\Delta_{R}}^{n} \ar^{can}[r] & \mathcal{O}_{\Delta_{R}^{*}}^{n} \ar^{g}[r] & \mathcal{O}_{\Delta_{R}^{*}}^{n},
}
\end{xy}
\hspace{\fill}

where $E$ is uniquely determined up to isomorphism by requiring that the diagram be cartesian. (By abuse of notation we do not indicate the obvious push-forwards to $X_{R}$ in this diagram.) It is easy to check (by pullback to $\Delta_{R}$ and $X_{R}^{*}$, respectively) that this diagram determines trivializations of $E$ over $\Delta_{R}$ and $X_{R}^{*}$. The transition function for these two trivializations is equal to $g$ by construction.

To see that this construction indeed gives an inverse for $\Phi(R)$ it remains to check that $E$ is a vector bundle. This is immediate by Lemma \ref{lemCoverings} together with faithfully flat descent, or by the following elementary argument: the matrix $g$ involves only finitely many elements of $A_{R}[1/z]$, whence in fact $g\in S_{R,f}[1/z]$ for some $f\in (R\otimes_{k}\O_{X,p})\cap R[[z]]^{\times}$. This shows that $E$ can as well be obtained by gluing trivial bundles over $U_{R,f}$ and over $X_{R}^{*}$, respectively. Now, since $U_{R,f} \subset X_{R}$ is Zariski-open, this shows that $E$ is a vector bundle.
\end{proof}

\section{`Formal' descent of vector bundles}\label{sectionFormal}

Let us now consider the situation introduced at the beginning in diagram \eqref{diagFormal}, where we consider the formal neighborhood $D_{R} = \Spec R[[z]]$ of $\Spec R\times\lbrace p\rbrace \subset X_{R}$.

By $\hat{T}$ we denote the functor, which associates to every $k$-algebra $R$ the set of isomorphism classes of triples $(E,\rho,\sigma)$, where $E$ is a vector bundle of rank $n$ over $X_{R}$ and 
\begin{align*}
\rho: \mathcal{O}_{X_{R}^{*}}^{n} \xrightarrow{\simeq} E_{\rvert X_{R}^{*}},\\
\sigma: \mathcal{O}_{D_{R}}^{n} \xrightarrow{\simeq} E_{\rvert D_{R}}
\end{align*}
are trivializations.

As in the previous section, we obtain a functorial morphism $\hat{\Phi}(R): \hat{T}(R) \to \Gl_{n}(R((z)))$ by assigning to each triple $(E,\rho,\sigma)$ the corresponding transition function over $D_{R}^{*}$.

\begin{theorem}[\cite{beauville-laszlo}, Proposition 1.4]\label{thmFormal}
The morphism $\hat{\Phi}$ is an isomorphism of functors.
\end{theorem}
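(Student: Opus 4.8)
The plan is to deduce Theorem~\ref{thmFormal} from Proposition~\ref{propAlgebraic} by an approximation argument resting on the fact that $A_{R}$ is $z$-adically dense in $R[[z]]$. First I would record the comparison between the two situations. The inclusion $A_{R}\subset R[[z]]$ corresponds to the morphism $D_{R}\to\Delta_{R}$, so restricting the trivialization over $\Delta_{R}$ along $D_{R}\to\Delta_{R}$ defines a natural transformation $\alpha\colon T\Rightarrow\hat{T}$, $(E,\rho,\sigma)\mapsto(E,\rho,\sigma|_{D_{R}})$. Since the transition function of $\alpha(E,\rho,\sigma)$ over $D_{R}^{*}$ is the image of the transition function over $\Delta_{R}^{*}$ under $A_{R}[1/z]\hookrightarrow R((z))$, I obtain a commutative square with $\Phi$ on top, $\hat{\Phi}$ on the bottom, $\alpha$ on the left, and the inclusion $\iota_{*}\colon\Gl_{n}(A_{R}[1/z])\hookrightarrow\Gl_{n}(R((z)))$ on the right. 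As $\Phi$ is an isomorphism (Proposition~\ref{propAlgebraic}) and $\iota_{*}$ is injective, $\alpha$ is injective and $\hat{\Phi}$ restricts to a bijection from $\alpha(T(R))$ onto $\Gl_{n}(A_{R}[1/z])$.

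The key technical input is the factorization
$$
\Gl_{n}(R((z)))=\Gl_{n}(A_{R}[1/z])\cdot\Gl_{n}(R[[z]]).
$$
To prove it I would use that $R\otimes_{k}\O_{X,p}\to R[[z]]$ induces an isomorphism modulo $z^{N}$ for every $N$, so that $A_{R}$, and hence $A_{R}[1/z]$, is $z$-adically dense. Given $g\in\Gl_{n}(R((z)))$, choose $g_{0}\in\Mat_{n}(A_{R}[1/z])$ agreeing with $g$ to high enough $z$-adic order that $h:=g_{0}^{-1}g$ is congruent to the identity modulo $z$; then $h\in\Gl_{n}(R[[z]])$. The point that needs care here, and which I expect to be the main obstacle, is invertibility: I must know that $g_{0}$ is invertible over $A_{R}[1/z]$ (not merely over $R((z))$) and that $h$ really lies in $\Gl_{n}(R[[z]])$. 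This is exactly where the definition of $A_{R}$ enters: an element of $A_{R}$ that is a unit in $R[[z]]$ is already a unit in $A_{R}$, since it is then a unit times one of the localizing elements $f\in(R\otimes_{k}\O_{X,p})\cap R[[z]]^{\times}$; likewise $A_{R}[1/z]\cap R[[z]]=A_{R}$, which keeps the two descriptions compatible.

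Next I would observe that both functors carry a right action of $\Gl_{n}(R[[z]])$ by precomposing $\sigma$ with an automorphism of the trivial bundle over $D_{R}$, and that $\hat{\Phi}$ is equivariant for right multiplication on $\Gl_{n}(R((z)))$. Surjectivity of $\hat{\Phi}$ is then immediate: its image contains $\Gl_{n}(A_{R}[1/z])$ by the square, is stable under right multiplication by $\Gl_{n}(R[[z]])$ by equivariance, and hence equals $\Gl_{n}(A_{R}[1/z])\cdot\Gl_{n}(R[[z]])=\Gl_{n}(R((z)))$ by the factorization.

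For injectivity I would again use equivariance to reduce, after writing a given transition function as $g=g_{0}h$ and multiplying by $h^{-1}$, to the case where the transition function lies in $\Gl_{n}(A_{R}[1/z])$. Since $\hat{\Phi}$ is already a bijection from $\alpha(T(R))$ onto $\Gl_{n}(A_{R}[1/z])$, it remains to show that every triple $(E,\rho,\sigma)\in\hat{T}(R)$ whose transition function lies in $\Gl_{n}(A_{R}[1/z])$ lies in the image of $\alpha$; that is, its trivialization over $D_{R}$ can be promoted to one over $\Delta_{R}$. Because the transition function has only finitely many entries, it lies in $\Gl_{n}(S_{R,f}[1/z])$ for some $f$, so the gluing data is defined over the Zariski-open $U_{R,f}\subset X_{R}$ of Lemma~\ref{lemCoverings}. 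Algebraizing the formal frame $\sigma$ to a frame over $U_{R,f}$ by the same approximation principle (a matrix close to an invertible one is invertible) exhibits $E$ as glued over the Zariski cover $\{X_{R}^{*},U_{R,f}\}$ and identifies $(E,\rho,\sigma)$ with $\alpha$ of the corresponding algebraic triple, completing the proof.
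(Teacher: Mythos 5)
Your overall strategy coincides with the paper's: everything hinges on the factorization $\Gl_{n}(R((z)))=\Gl_{n}(A_{R}[1/z])\cdot\Gl_{n}(R[[z]])$ (the paper's Lemma \ref{lemDensity}), after which one reduces to Proposition \ref{propAlgebraic}; your bookkeeping via $\alpha$, the commutative square, and $\Gl_{n}(R[[z]])$-equivariance is a reasonable repackaging of what the paper does with its large diagram. The genuine gap is in your proof of the factorization, at exactly the point you yourself flag. Having chosen $g_{0}\in\Mat_{n}(A_{R}[1/z])$ congruent to $g$ to high $z$-adic order, you must show $g_{0}\in\Gl_{n}(A_{R}[1/z])$; by the adjugate formula this is equivalent to $\det g_{0}\in(A_{R}[1/z])^{\times}$, and what you know a priori is only $\det g_{0}\in A_{R}[1/z]\cap R((z))^{\times}$. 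The fact you invoke --- that an element of $A_{R}$ which is a unit in $R[[z]]$ is already a unit in $A_{R}$ --- is true but insufficient, because a unit of $R((z))$ need not be a power of $z$ times a unit of $R[[z]]$; this is exactly what happens when $R$ is not reduced (or not a domain), which is precisely the case the theorem must cover. For instance, for $R=k[\epsilon]/(\epsilon^{2})$ the element $\epsilon+z$ lies in $A_{R}\cap R((z))^{\times}$ (its inverse is $z^{-1}-\epsilon z^{-2}$), yet $z^{m}(\epsilon+z)$ is a unit of $R[[z]]$ for no $m\in\mathbb{Z}$, so your criterion never applies to it.

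What you need is a statement of the form $C^{\times}=C\cap R((z))^{\times}$ for a subring $C\subset R((z))$ in which you can approximate. This is precisely the paper's Lemma \ref{lemUnits}, and note that the paper neither proves nor claims it for $C=A_{R}[1/z]$: it proves it for the smaller ring $B=\cup_{P}R[z,z^{-1},P^{-1}]$, with $P$ running through $R[z]\cap R[[z]]^{\times}$, by a genuinely nontrivial argument that splits a Laurent-polynomial unit of $R((z))$ into a nilpotent part plus a part with unit leading coefficient; the approximation (density) argument is then run inside $\Mat_{n}(B)$, and the inclusion $B\subset A_{R}[1/z]$ is used only at the very end. The paper's closing remark stresses that this unit property is the crux and fails for natural related rings such as $\O_{X,p}\otimes R$, so it cannot be waved through. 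To repair your proof, approximate $g$ by a matrix over $B$ and supply the proof of Lemma \ref{lemUnits}; as written, the key lemma of the whole theorem is missing. A secondary, smaller issue: in your injectivity step, the principle ``a matrix close to an invertible one is invertible'' is not what promotes the formal frame $\sigma$ to a frame over $U_{R,f}$; what is actually needed is that a finite projective $S_{R,f}$-module which becomes free over $R[[z]]$ becomes free over a further localization $S_{R,f'}$ (lift a frame modulo $z$ and kill the finitely generated cokernel by the determinant trick, observing that the resulting multiplier is congruent to $1$ modulo $z$ and hence lies in $R[[z]]^{\times}$). That step is fillable, but as written it is a sketch rather than a proof.
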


\begin{proof}
In order to construct an inverse for $\hat{\Phi}$, i.e. to construct a triple $(E,\rho,\sigma)$ from a given $\gamma \in \Gl_{n}(R((z)))$, we proceed exactly as in the proof of Proposition \ref{propAlgebraic}. The only non-trivial thing to check is that the quasi-coherent sheaf $E$, defined so to make the diagram

\hspace{\fill}
\begin{equation}\label{diagX}
\begin{xy}
\xymatrix{
E \ar[rr]\ar[d] & & \mathcal{O}_{X_{R}^{*}}^{n} \ar^{can}[d] \\
\mathcal{O}_{D_{R}}^{n} \ar^{can}[r] & \mathcal{O}_{D^{*}_{R}}^{n} \ar^{\gamma}[r] & \mathcal{O}_{D^{*}_{R}}^{n},
}
\end{xy}
\end{equation}
\hspace{\fill}

cartesian, is a vector bundle over $X_{R}$. We do this by reducing to a situation where Proposition \ref{propAlgebraic} applies.
More precisely, Lemma \ref{lemDensity} below shows that every $\gamma \in \Gl_{n}(R((z)))$ can be written as a product $\gamma = g\cdot \delta$, where $g\in \Gl_{n}(A_{R}[1/z])$ and $\delta\in \Gl_{n}(R[[z]])$.

Thus diagram \eqref{diagX} `decomposes' likewise, and yields the big diagram

\hspace{\fill}
\begin{xy}
\xymatrix{
E \ar@{=}[r]\ar[dd] & E \ar[rr]\ar[d] & & \mathcal{O}_{X_{R}^{*}}^{n} \ar^{can}[d] \\
& \mathcal{O}_{\Delta_{R}}^{n} \ar^{can}[r]\ar[d] & \mathcal{O}_{\Delta_{R}^{*}}^{n} \ar^{g}[r]\ar[d] & \mathcal{O}_{\Delta_{R}^{*}}^{n}\ar[d] \\
\mathcal{O}_{D_{R}}^{n} \ar^{\simeq}_{\delta}[r] & \mathcal{O}_{D_{R}}^{n} \ar^{can}[r] & \mathcal{O}_{D^{*}_{R}}^{n} \ar_{g}[r] & \mathcal{O}_{D^{*}_{R}}^{n}.
}
\end{xy}
\hspace{\fill}

The two small squares in this diagram are trivially cartesian, while the big rectangle coincides with the square \eqref{diagX}, and is thus cartesian by definition of $E$. Consequently, the upper rectangle is cartesian, which proves that $E$ is nothing but the vector bundle corresponding to the transition matrix $g\in \Gl_{n}(A_{R}[1/z])$ under the correspondence of Proposition \ref{propAlgebraic}.
\end{proof}

\begin{lemma}\label{lemDensity}
We have $\Gl_{n}(R((z))) = \Gl_{n}(A_{R}[1/z])\cdot \Gl_{n}(R[[z]])$.
\end{lemma}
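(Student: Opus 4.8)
The plan is to prove the factorization $\gamma=h\delta$ with $h\in\Gl_n(A_R[1/z])$ and $\delta\in\Gl_n(R[[z]])$ by approximating $\gamma$ $z$-adically by a matrix with entries in the dense subring $R\otimes_k\O_{X,p}\subseteq A_R$, and then absorbing the error into $\Gl_n(R[[z]])$. The basic input is the observation made at the beginning of Section \ref{sectionAlgebraic} that $R[[z]]$ is the $z$-adic completion of $R\otimes_k\O_{X,p}$. Since $z$ is a non-zero-divisor, this gives three facts I shall use repeatedly: $R\otimes_k\O_{X,p}$ is $z$-adically dense in $R[[z]]$; the maps $(R\otimes_k\O_{X,p})/z^m\to R[[z]]/z^m$ are isomorphisms; and consequently any element of $R\otimes_k\O_{X,p}$ that lies in $z^mR[[z]]$ already lies in $z^m(R\otimes_k\O_{X,p})$.

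First I would reduce to the integral case. Because $z$ is a unit in $A_R[1/z]$, the scalar matrix $z^NI$ belongs to $\Gl_n(A_R[1/z])$, so a factorization of $z^N\gamma$ yields one of $\gamma$ after multiplying the left factor by $z^{-N}I$. Hence I may assume $\gamma\in\Mat_n(R[[z]])$, with $d:=\det\gamma$ a unit in $R((z))$; invertibility of $d$ in $R((z))$ means precisely that $d\mid z^M$ in $R[[z]]$ for some $M$, say $de=z^M$ with $e\in R[[z]]$. Now I choose, by density, a matrix $h\in\Mat_n(R\otimes_k\O_{X,p})$ with $h\equiv\gamma\pmod{z^{M+1}}$ and set $\delta:=h^{-1}\gamma$. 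Since determinants are polynomial in the entries, $\det h\equiv d\pmod{z^{M+1}}$, and together with $de=z^M$ this shows $\det h\mid z^M$ in $R[[z]]$ as well.

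The claim that $\delta\in\Gl_n(R[[z]])$ is the routine half. Writing $\gamma=h+z^{M+1}E$ with $E\in\Mat_n(R[[z]])$ gives $\delta=I+z^{M+1}h^{-1}E$. From $\det h\mid z^M$ and $h^{-1}=(\det h)^{-1}\mathrm{adj}(h)$ I get $h^{-1}\in z^{-M}\Mat_n(R[[z]])$, so $z^{M+1}h^{-1}E\in z\Mat_n(R[[z]])$ and therefore $\delta\in I+z\Mat_n(R[[z]])$. Such a matrix is invertible over $R[[z]]$, its reduction modulo $z$ being the identity, so $\delta\in\Gl_n(R[[z]])$.

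I expect the remaining point, that $h\in\Gl_n(A_R[1/z])$, to be the main obstacle, since a priori one only knows that $a:=\det h$ is a unit in the large ring $R((z))$, whereas invertibility of $h$ over $A_R[1/z]$ requires $a\in(A_R[1/z])^\times$. The device is to approximate the cofactor $e$: choose $c\in R\otimes_k\O_{X,p}$ with $c\equiv e\pmod{z^{M+1}}$. Using $a\equiv d\pmod{z^{M+1}}$ and $de=z^M$, a short computation gives $ac=z^M(1+z\rho)$ for some $\rho\in R[[z]]$, with $1+z\rho\in R[[z]]^\times$. Here $ac\in R\otimes_k\O_{X,p}$ lies in $z^MR[[z]]$, so by the intersection property above $ac=z^Mu$ with $u\in R\otimes_k\O_{X,p}$ and $u=1+z\rho\in R[[z]]^\times$. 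Thus $u$ is exactly one of the elements $f\in(R\otimes_k\O_{X,p})\cap R[[z]]^\times$ inverted in the definition of $A_R$, whence $u\in A_R^\times$ and $a\cdot(cu^{-1}z^{-M})=1$ in $A_R[1/z]$. Therefore $\det h$ is a unit in $A_R[1/z]$, so $h\in\Gl_n(A_R[1/z])$, and $\gamma=h\delta$ is the desired factorization. The crux throughout is this interplay: localizing $R\otimes_k\O_{X,p}$ only at its $R[[z]]$-units is precisely what upgrades ``unit in $R((z))$'' to ``unit in $A_R[1/z]$''.
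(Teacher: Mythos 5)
Your proof is correct, and it takes a genuinely different route from the paper's. The paper argues topologically: it introduces the auxiliary ring $B=\bigcup_{P} R[z,z^{-1},P^{-1}]\subset A_{R}[1/z]$ (with $P$ running over $R[z]\cap R[[z]]^{\times}$), shows that $\Gl_{n}(R[[z]])$ is open in $\Gl_{n}(R((z)))$, shows that $\Gl_{n}(B)$ is dense --- this is where Lemma \ref{lemUnits}, i.e. $B^{\times}=R((z))^{\times}\cap B$, enters, proved by splitting a Laurent polynomial invertible in $R((z))$ into a nilpotent part plus a part with unit leading coefficient --- and concludes that $\Gl_{n}(B)\cdot\Gl_{n}(R[[z]])$, being a union of cosets of an open subgroup, is open, closed and dense, hence everything. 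You bypass $B$, Lemma \ref{lemUnits}, and the topology entirely: you approximate $\gamma$ to the explicit precision $z^{M+1}$ dictated by $\det\gamma\mid z^{M}$, absorb the error into $\Gl_{n}(R[[z]])$ (your computation $\delta=I+z^{M+1}h^{-1}E\in I+z\Mat_{n}(R[[z]])$ is sound, since $h^{-1}\in z^{-M}\Mat_{n}(R[[z]])$), and --- this is the real substitute for Lemma \ref{lemUnits} --- prove $\det h\in(A_{R}[1/z])^{\times}$ by approximating the cofactor $e$ as well, so that $\det h\cdot c=z^{M}u$ with $u\in(R\otimes_{k}\O_{X,p})\cap R[[z]]^{\times}$ exactly one of the elements inverted in forming $A_{R}$; this last observation is the same insight the paper isolates in Lemma \ref{lemUnits}, but realized directly inside $A_{R}$ rather than inside $B$. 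All your steps check out, granted the fact asserted at the start of Section \ref{sectionAlgebraic} (and needed by both proofs) that $R[[z]]$ is the $z$-adic completion of $R\otimes_{k}\O_{X,p}$, which yields your density statement and the injectivity of $(R\otimes_{k}\O_{X,p})/z^{m}\to R[[z]]/z^{m}$. As for what each approach buys: the paper's soft argument yields the sharper factorization $\Gl_{n}(R((z)))=\Gl_{n}(B)\cdot\Gl_{n}(R[[z]])$ with the smaller, curve-independent ring $B$; yours is effective (the precision of approximation is controlled by $\det\gamma$), needs no auxiliary ring and no point-set topology on matrix spaces, and even produces a left factor with entries in $(R\otimes_{k}\O_{X,p})[1/z]$, only its inverse requiring the localization defining $A_{R}$.
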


\begin{proof}
We set $B := \displaystyle\cup_{P\in R[z]\cap R[[z]]^{\times}} R[z,z^{-1},P^{-1}] \subset R((z))$ (Note that the ring $B\cap R[[z]]$ is equal to the ring $A_{R}$ in the case $X=\mathbb{P}^{1}_{k}$.). Since $B\subset A_{R}[1/z]$, it suffices to check that 
$\Gl_{n}(R((z))) = \Gl_{n}(B)\cdot \Gl_{n}(R[[z]])$. First we note that $\Gl_{n}(R[[z]])\subset \Gl_{n}(R((z)))$ is open: Namely, $\det: \Mat_{n}(R[[z]]) \to R[[z]]$ is continuous and $R$ carries the discrete topology, and thus $R^{\times} \subset R$ is open. This shows that $\Gl_{n}(R[[z]]) \subset \Mat_{n}(R[[z]]) \subset \Mat_{n}(R((z)))$ are two open inclusions, so $\Gl_{n}(R[[z]])\subset \Gl_{n}(R((z)))$ is as well open. As a second step we deduce from Lemma \ref{lemUnits} below that $\Gl_{n}(B) = \Gl_{n}(R((z)))\cap \Mat_{n}(B)$. Since $\Mat_{n}(B)\subset \Mat_{n}(R((z)))$ is dense and $\Gl_{n}(R((z)))\subset \Mat_{n}(R((z)))$ is open, we conclude that $\Gl_{n}(B) \subset \Gl_{n}(R((z)))$ is dense.

These two statements together imply that $\Gl_{n}(B)\cdot \Gl_{n}(R[[z]])$ is dense and closed in $\Gl_{n}(R((z)))$, whence the lemma.
\end{proof}

\begin{lemma}\label{lemUnits}
The subring $B\subset R((z))$ defined above satisfies
$B^{\times} = R((z))^{\times} \cap B.$
\end{lemma}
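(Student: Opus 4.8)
The inclusion $B^\times \subseteq R((z))^\times \cap B$ is immediate, since any inverse in $B$ is a fortiori an inverse in $R((z))$. The content is the reverse inclusion, so I would fix $b\in B$ that is invertible in $R((z))$ and try to invert it already in $B$. First I would record that $B$ is nothing but the localization $R[z][T^{-1}]$ of the polynomial ring at the multiplicative set $T=\{\,z^{m}P \mid m\ge 0,\ P\in R[z]\cap R[[z]]^{\times}\,\}$ (and $R[z]\cap R[[z]]^{\times}=\{P:P(0)\in R^{\times}\}$). Writing $b=Q/s$ with $Q\in R[z]$ and $s\in T$, and noting that $s$ is already a unit both in $B$ and in $R((z))$ (as $z$ and every such $P$ are), the problem reduces to the case $b=Q\in R[z]$. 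Thus the whole statement comes down to: if $Q\in R[z]$ is a unit in $R((z))$, then $Q$ is a unit in $B$; and by the elementary description of units in a localization this means producing a polynomial $c\in R[z]$ with $Qc\in T$, i.e. $Qc=z^{m}P$ with $P(0)\in R^{\times}$.

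To produce such a $c$ I would control the $z$-adic order of $Q$ over $R$. The plan is to show that there are orthogonal idempotents $e_{0},\dots,e_{d}\in R$ with $\sum_{i}e_{i}=1$ such that, on each factor, $e_{i}Q=z^{i}u_{i}$ with $u_{i}=e_{i}a_{i}+e_{i}a_{i+1}z+\cdots\in(e_{i}R)[z]$ of unit constant term $e_{i}a_{i}\in(e_{i}R)^{\times}$ (here the $a_{i}$ are the coefficients of $Q$); equivalently $e_{i}a_{j}=0$ for $j<i$. Granting this, the combination $c:=\sum_{i=0}^{d}e_{i}z^{d-i}\in R[z]$ does the job: one computes $Qc=\sum_{i}z^{i}u_{i}z^{d-i}=z^{d}\bigl(\sum_{i}u_{i}\bigr)$, and $P:=\sum_{i}u_{i}$ has $P(0)=\sum_{i}e_{i}a_{i}$, a unit (with inverse $\sum_{i}e_{i}(e_{i}a_{i})^{-1}$) by orthogonality. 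Hence $Qc=z^{d}P\in T$, as required. I would establish the decomposition by induction on $d=\deg Q$, the case $d=0$ being that $a_{0}\in R$ is a unit in $R((z))$ iff it is a unit in $R$. For the inductive step I would split off the idempotent $e_{0}$ generating the ideal $(a_{0})$; on the complementary direct factor $(1-e_{0})R$ the coefficient $a_{0}$ becomes $0$, so $(1-e_{0})Q=z\,Q_{1}$ with $Q_{1}$ of degree $d-1$ and still a unit in $((1-e_{0})R)((z))$, and since idempotents of a direct factor are idempotents of $R$ the induction closes.

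The crux, and the step I expect to be the main obstacle, is therefore the base assertion invoked at each stage: if $Q$ is a unit in $R((z))$, then $(Q(0))$ is generated by an idempotent, equivalently $Q(0)$ is von Neumann regular, equivalently the finitely presented $R$-module $R[[z]]/(Q)$ — whose fibre dimension at a prime $\mathfrak p$ is exactly the order of $Q$ modulo $\mathfrak p$ — is finite locally free. What makes this delicate is precisely the setting of the note: since $R$ is not assumed Noetherian, $R[[z]]$ need not be flat over $R$, and one cannot read off local freeness from a flatness or completion argument. Instead I would extract the idempotent by hand from the identity $Q\cdot Q^{-1}=1$ in $R((z))$: writing $Q^{-1}=\sum_{j\ge -s}b_{j}z^{j}$ and comparing coefficients, the vanishing of the strictly negative part together with the normalisation $\sum_{i}a_{i}b_{-i}=1$ in degree $0$ should pin down an explicit $x\in R$ with $a_{0}xa_{0}=a_{0}$, whence $e_{0}=a_{0}x$ is idempotent with $(e_{0})=(a_{0})$. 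Turning this coefficient bookkeeping into the clean regularity identity is where the real work lies.
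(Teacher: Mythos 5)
Your reduction steps are sound: $B$ is indeed the localization of $R[z]$ at the multiplicative set $T=\{z^{m}P \mid m\ge 0,\ P(0)\in R^{\times}\}$, so the lemma does come down to showing that a polynomial $Q\in R[z]$ which is invertible in $R((z))$ admits some $c\in R[z]$ with $Qc=z^{m}P$, $P(0)\in R^{\times}$; this matches the paper's own reduction in spirit. But the crux assertion on which your induction rests is false, and the obstruction is exactly nilpotents. Take $R=k[\epsilon]/(\epsilon^{2})$ and $Q=z+\epsilon$. Then $Q$ is a unit in $R((z))$, with inverse $z^{-1}-\epsilon z^{-2}$, yet $Q(0)=\epsilon$ is not von Neumann regular ($\epsilon x\epsilon=0$ for all $x$), and the only idempotents of the local ring $R$ are $0$ and $1$, neither of which generates $(\epsilon)$. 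For the same reason your global decomposition cannot exist for this $Q$: it would force either $\epsilon\in R^{\times}$ (case $e_{0}=1$) or $\epsilon=0$ (case $e_{1}=1$). Note that the target statement is still true here --- $c=z-\epsilon$ gives $Qc=z^{2}$ --- so it is your method of producing $c$, not the goal, that breaks. Your structure theorem is correct precisely over \emph{reduced} rings, where the order function $\mathfrak{p}\mapsto\mathrm{ord}(Q\bmod\mathfrak{p})$ of a unit is locally constant (its level sets are clopen, since $\{\mathrm{ord}(Q)\ge n\}$ is closed and coincides with the complement of the closed set $\{\mathrm{ord}(Q^{-1})\ge 1-n\}$); in the presence of nilpotents one can only force the coefficients $e_{i}a_{j}$, $j<i$, to be nilpotent, not zero.

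It is instructive to compare with the paper, whose proof attacks exactly the piece you are missing and glosses over the piece you treat carefully. After the same reduction to Laurent polynomials, the paper writes $f=-N+Q$ with $N$ a nilpotent Laurent polynomial and $Q$ of unit lowest-order coefficient, and disposes of $N$ via the telescoping identity $(-N+Q)(N^{i}+N^{i-1}Q+\dotsb+Q^{i})=-N^{i+1}+Q^{i+1}$, which for large $i$ multiplies $f$ into an element with unit lowest-order coefficient, invertible in $B$ by construction. (Conversely, that decomposition silently assumes the order of $f$ is constant on $\Spec R$: over $R=k\times k$ the unit $f=(1,0)+(0,1)z$ admits no such splitting, and there your idempotents are exactly what is needed, so each argument covers one half of the difficulty.) A complete proof can be assembled by combining the two: establish your idempotent decomposition over $R/\mathrm{nil}(R)$, lift the idempotents along the nil ideal $\mathrm{nil}(R)$ to get $e_{i}Q=z^{i}u_{i}+N_{i}$ with $u_{i}(0)\in(e_{i}R)^{\times}$ and $N_{i}$ nilpotent, and then kill each $N_{i}$ factorwise with the telescoping identity. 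As written, however, your induction cannot even start whenever $R$ has a nonzero nilradical.
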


\begin{proof}
We consider $f\in R((z))^{\times} \cap B$. By multiplying with a suitable $P\in R[z]\cap R[[z]]^{\times}$, we may reduce to the case $f\in R((z))^{\times} \cap R[z,z^{-1}]$. Such an $f$ has the form $f = -N + Q$, where $N\in R[z,z^{-1}]$ is a nilpotent Laurent polynomial and the leading coefficient of $Q\in R((z))^{\times}$ is a unit in $R$. Using the formula $(-N+Q)(N^{i}+N^{i-1}Q+\dotsb+Q^{i})=(-N^{i}+Q^{i})$ we may assume that $f=Q^{i}$, i.e. has a leading coefficient in $R^{\times}$. Multiplying with $z^{m}$ for a suitable $m\in\mathbb{Z}$ we obtain $z^{m}f \in R[z]\cap R[[z]]^{\times}$, which is invertible in $B$ by construction.
\end{proof}

The property of the ring $B$ which is exhibited in the last lemma is crucial for our strategy of approximation to work. This is what forces us to consider the, at first glance, rather artificial rings $A_{R}$ instead of for example just $\O_{X,p}\otimes R$. The latter would not contain the ring $B$, and in particular would not have the property of Lemma \ref{lemUnits}.

\end{document}